\setlist[enumerate]{topsep=0pt,label=\textup{(\roman*)},leftmargin=\parindent,labelsep=.5em}
\setlist{noitemsep}
\declaretheoremstyle[
  spaceabove=\topsep, spacebelow=6pt,
  headfont=\normalfont\bfseries,
  notefont=\mdseries, notebraces={(}{)},
  bodyfont=\normalfont\itshape,
  postheadspace=.5em,
  qed=\qedsymbol
]{mystyle}
\declaretheoremstyle[
  spaceabove=\topsep, spacebelow=6pt,
  headfont=\normalfont\bfseries,
  notefont=\mdseries, notebraces={(}{)},
  bodyfont=\normalfont,
  postheadspace=.5em,
  qed=\qedsymbol
]{mydefstyle}
\theoremstyle{mystyle}
\declaretheorem[numberlike=subsection]{proposition}
\declaretheorem[numbered=no,name=Theorem]{thm*}
\declaretheorem[numbered=no,name=Corollary]{cor*}
\declaretheorem[numberlike=subsection]{lemma}
\declaretheorem[numbered=no,name=Coleman--Oort Conjecture]{COConj}
\declaretheorem[numbered=no,name=Main Theorem]{MainA}
\theoremstyle{mydefstyle}
\declaretheorem[numberlike=subsection]{definition}
\declaretheorem[numberlike=subsection]{remarks}
\numberwithin{equation}{subsection}
\titleformat{\section}[block]
  {\filcenter\normalfont\large\bfseries}{\thesection.}{.5em}{}
\titleformat{\subsection}[runin]
  {\normalfont\bfseries}{\thesubsection.}{0em}{}
\titlespacing*{\section} {0pt}{6ex plus .2ex minus 1ex}{3ex plus .2ex minus 1ex}
\titlespacing*{\subsection} {0pt}{2ex plus .2ex minus 1ex}{.5em}
\newcommand{\mA}{\mathbb{A}}
\newcommand{\mC}{\mathbb{C}}
\newcommand{\mH}{\mathbb{H}}
\newcommand{\mP}{\mathbb{P}}
\newcommand{\mQ}{\mathbb{Q}}
\newcommand{\mR}{\mathbb{R}}
\newcommand{\mS}{\mathbb{S}}
\newcommand{\Af}{\mA_{\mathrm{f}}}
\newcommand{\Gal}{\mathrm{Gal}}
\newcommand{\GU}{\mathrm{GU}}
\newcommand{\Hom}{\mathrm{Hom}}
\newcommand{\Jac}{\mathrm{Jac}}
\newcommand{\Lie}{\mathrm{Lie}}
\newcommand{\PGL}{\mathrm{PGL}}
\newcommand{\PGU}{\mathrm{PGU}}
\newcommand{\Res}{\mathrm{Res}}
\newcommand{\Sh}{\mathrm{Sh}}
\newcommand{\SO}{\mathrm{SO}}
\newcommand{\Stab}{\mathrm{Stab}}
\newcommand{\ad}{\mathrm{ad}}
\newcommand{\comp}{\mathrm{c}}
\newcommand{\dec}{\mathrm{dec}}
\newcommand{\diag}{\mathrm{diag}}
\newcommand{\nc}{\mathrm{nc}}
\newcommand{\rank}{\mathrm{rk}}
\newcommand{\isomarrow}{\xrightarrow{~\sim~}}
\begin{document}

\begin{center}
\textbf{\Large The Coleman--Oort conjecture: reduction to three key cases}\footnote{The main theorem of this paper is based on notes written in 2011, which were long left unfinished because I was (and still am) unable to push this result further. My interest in this was rekindled by an invitation of Rachel Pries and Andrew Sutherland to talk about the Coleman--Oort conjecture in the VaNTAGe seminar, for which I should like to thank them.}
\bigskip

\textit{by}
\bigskip

{\Large Ben Moonen}
\end{center}
\vspace{8mm}

{\small %% for abstract and MSC

\noindent
\begin{quoting}
\textbf{Abstract.} We show that the Coleman--Oort conjecture can be reduced to three particular cases. As an application we extend a result of Lu and Zuo, to the effect that for $g \geq 8$ the Coleman--Oort conjecture is true on the hyperelliptic locus.
\medskip

\noindent
\textit{AMS 2010 Mathematics Subject Classification:\/} 11G15, 14G35, 14H40
\end{quoting}

} %% end of small
\vspace{4mm}

\section*{Introduction}

It has been conjectured by Coleman (\cite{Coleman}, Conjecture~6) that for $g$ sufficiently large, there are only finitely many complete nonsingular complex curves~$C$ of genus~$g$, up to isomorphism, such that $\Jac(C)$ is a CM abelian variety. By the Andr\'e--Oort Conjecture, now a theorem of Tsimerman~\cite{Tsimerman} (building on contributions of many others), Coleman's conjecture is equivalent to the following one.

\begin{COConj}
Let $\mathsf{A}_g$ be the coarse moduli space of $g$-dimensional principally polarized complex abelian varieties. If $g$ is sufficiently large, there are no special subvarieties $S \subset \mathsf{A}_g$ of positive dimension that are contained in the Torelli locus $\mathsf{T}_g \subset \mathsf{A}_g$ and that meet the open Torelli locus~$\mathsf{T}_g^\circ$.
\end{COConj}

In this conjecture we should take $g$ to be at least~$8$, as for all smaller genera there are counterexamples. We refer to~\cite{BMFO} for further discussion.

Over the last years, the Coleman--Oort conjecture has attracted a lot of attention, and many mathematicians have contributed to a better understanding of it. Our modest goal here is to point out that the conjecture may be reduced to a couple of key cases. This is inspired by the following result, which combines Hain's results in~\cite{Hain} with some refinements due to de Jong and Zhang~\cite{dJZ}.

\begin{thm*}[Hain, de Jong--Zhang]
Let $S \subset \mathsf{A}_g$ with $g \geq 4$ be a special subvariety of positive dimension such that $S \subset \mathsf{T}_g$ and $S$ meets~$\mathsf{T}_g^\circ$. Assume the generic Mumford--Tate group on~$S$ has a $\mQ$-simple adjoint group. Then at least one of the following is true:
\begin{enumerate}[left= 0pt .. 3em,label=\textup{(\textup{H}\arabic*)}]
\item\label{H1} $S$ is a ball quotient,

\item\label{H2} all irreducible components of the intersection of~$S$ with $\mathsf{T}_g^\dec = \mathsf{T}_g\setminus \mathsf{T}_g^\circ$ have codimension~$\leq 2$ in~$S$,

\item\label{H3} the Baily--Borel compactification $S \subset S^*$ has a non-empty boundary and at least one irreducible component of the boundary has codimension~$\leq 2$ in~$S^*$.
\end{enumerate}
\end{thm*}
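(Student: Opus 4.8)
The plan is to combine the structure theory of special subvarieties of $\mathsf{A}_g$ with the differential geometry of the Torelli locus, so that the trichotomy falls out of a local constraint propagated to the two natural ``boundaries'' of $S$. First I would pass to a suitable finite cover and record the structure of $S$. Being special of positive dimension with $\mQ$-simple adjoint Mumford--Tate group $G$, one has $G^{\ad}\cong\Res_{F/\mQ}H$ for a number field $F$ and an absolutely simple $F$-group $H$, and the Hermitian symmetric domain $X$ uniformizing $S$ splits, over the archimedean places of $F$, as a product of factors, only some of which are non-compact. Since $S\subset\mathsf{T}_g$ meets $\mathsf{T}_g^\circ$, restricting the universal family of principally polarized abelian varieties yields, over $S^\circ:=S\cap\mathsf{T}_g^\circ$, the family of Jacobians of a family $f\colon\mathcal{C}\to S^\circ$ of smooth projective curves of genus $g$, with Hodge bundle $E=f_*\omega_{\mathcal{C}/S^\circ}$ and Higgs (Kodaira--Spencer) field $\theta$; and $S$, being special, is totally geodesic in $\mathsf{A}_g$, so its second fundamental form there vanishes.

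Next I would extract the local constraint. At a point $x=[\Jac C]\in S^\circ$ with $C$ smooth of genus $g\ge4$, identify $T_x\mathsf{A}_g$ with $\mathrm{Sym}^2 H^1(C,\mathcal{O}_C)$ and $T_x\mathsf{T}_g$ with the annihilator of the degree-two part $I_2(C)$ of the canonical ideal of $C$ inside $\mathrm{Sym}^2H^0(C,\omega_C)$; the second fundamental form of $\mathsf{T}_g^\circ$ in $\mathsf{A}_g$ at $x$ then takes values in $I_2(C)^\vee$ and is expressible through the multiplication maps of pluricanonical sections. Because $S$ is totally geodesic, this form must vanish on $\mathrm{Sym}^2(T_xS)$, which, via the theorems of Max Noether and Petri (the canonical curve is projectively normal, and its ideal is generated by quadrics unless $C$ is trigonal or a plane quintic) together with Green's analysis of the rank-$\le4$ quadrics in $I_2(C)$, severely restricts $T_xS$: either its image in $\mathrm{Sym}^2H^1(C,\mathcal{O}_C)$ is spanned by symmetric tensors of rank $\le2$ sharing a common linear factor --- the signature of an $\SU(n,1)$-variation --- or the tangential directions coming from distinct non-compact factors of $X$ interact in $\mathrm{Sym}^2$ in a way incompatible with $\mQ$-simplicity of $G^{\ad}$ at a general smooth fibre, which cannot happen while the fibre stays smooth and general.

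The final step is to globalize this alternative into the trichotomy. If the ``ball signature'' holds at a general point of $S^\circ$, the generic variation is of $\SU(n,1)$-type with a single non-compact factor, so $S$ is a ball quotient and \ref{H1} holds. Otherwise the Higgs bundle $(E,\theta)$ cannot be as large over $S^\circ$ as a genuinely higher-dimensional totally geodesic Torelli family would require, and one studies how it degenerates --- equivalently, how the limit mixed Hodge structure and its weight filtrations behave --- along the two loci bounding $S^\circ$ inside $\mathsf{A}_g$: the locus $\mathsf{T}_g^\dec\cap S$, where the curves acquire nodes but the abelian variety survives, and the Baily--Borel boundary $S^*\setminus S$, where the abelian varieties degenerate to semi-abelian ones (this boundary being empty precisely when $G^{\ad}$ is $\mQ$-anisotropic, in which case $S=S^*$ is projective and \ref{H3} is excluded outright). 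An Arakelov-type inequality, which is an equality for the locally symmetric variation in the directions cut out by the Shimura datum, combined with the rigidity of the canonical ideal of a general curve, forces so much of $\theta$ to vanish along whichever of these two loci is non-empty that it must carry an irreducible component of codimension $\le2$ --- in $S$ for the nodal locus, giving \ref{H2}, or in $S^*$ for the Baily--Borel boundary, giving \ref{H3}.

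The hard part will be this last quantitative step: squeezing the codimension down to $\le2$, rather than merely ``positive'', requires an exact accounting of how much Higgs field --- equivalently, how many quadrics through the canonical curve, equivalently, how much of the second fundamental form of the Torelli locus --- must die along the boundary, matched against a sharp lower bound for the codimension of a locus that can support that much degeneration; this is where Green's and Petri's fine results on canonical syzygies, and the Arakelov inequality in its sharp form, are indispensable, and it is presumably at this point that the refinements of de Jong and Zhang improve on Hain's original bounds. A secondary obstacle is the case $F\ne\mQ$: one must exclude a non-degenerate totally geodesic Torelli family over a genuine product $X=\prod_v X_v$ of several non-compact factors, and such a product structure has again to be ruled out --- unless the curves degenerate --- by confronting it with the geometry of $\mathsf{T}_g$ inside $\mathsf{A}_g$.
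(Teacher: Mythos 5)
Note first that the paper does not prove this statement at all: it is quoted as input, with the proof residing in Hain's paper and in de Jong--Zhang's refinement, and the body of the paper only \emph{uses} the trichotomy. So the comparison is with those cited arguments, and your proposal takes a genuinely different route from them --- but as it stands it does not constitute a proof. The decisive gap is that the entire quantitative content of the theorem, namely the codimension $\leq 2$ bounds in (H2) and (H3), is exactly the step you defer (``the hard part will be this last quantitative step''). Nothing in the sketch supplies a mechanism converting ``much of the Higgs field must vanish along the degeneration loci'' into a lower bound on the dimension of those loci; an Arakelov-type equality for the locally symmetric variation constrains slopes and degrees, not the codimension of the locus where curves become nodal or where the Baily--Borel boundary sits. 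Likewise, your local dichotomy at a generic point --- vanishing of the second fundamental form on $\mathrm{Sym}^2(T_xS)$ plus Noether--Petri--Green forcing either an ``$\SU(n,1)$ signature'' or an incompatibility with $\mQ$-simplicity --- is asserted rather than proved, and it is not a known consequence of those results: the second-fundamental-form method (Colombo--Pirola--Tortora, Colombo--Frediani--Ghigi) yields dimension bounds for totally geodesic germs in $\mathsf{T}_g$, not a structural trichotomy of the type claimed, and it says nothing by itself about what happens along $\mathsf{T}_g^\dec$ or at the cusps.

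The actual proofs are of a different nature: group-theoretic and Hodge-theoretic rather than infinitesimal. Over the open part of $S$ where the fibres stay smooth and the Jacobian indecomposable, the symplectic monodromy of the lattice $\pi_1$ lifts to the mapping class group, and one studies the normal function of the Ceresa cycle (equivalently the associated biextension line bundle) attached to the family of curves. The codimension hypotheses enter precisely here: if every component of $S \cap \mathsf{T}_g^\dec$ had codimension $\geq 3$ in $S$ and every Baily--Borel boundary component codimension $\geq 3$ in $S^*$ (with de Jong--Zhang sharpening Hain's original bounds), then removing these loci changes neither the fundamental group nor the relevant low-degree cohomology of the lattice, so the normal function and its positivity extend to all of $S$; vanishing results for low-degree cohomology of lattices in groups with all simple factors of real rank $\geq 2$, together with this positivity, then force the uniformizing group to have a rank-one factor of type $\SU(n,1)$, i.e.\ $S$ is a ball quotient. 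That is where the trichotomy, and in particular the precise threshold $\leq 2$, comes from; your differential-geometric framework would need an entirely new argument to recover it.
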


Interesting further results were obtained by Andreatta~\cite{Andreatta}.

De Jong and Zhang in their paper remark that it is complicated to list all Shimura varieties satisfying one of these conditions. However, this problem is greatly simplified if one observes that for the Coleman--Oort conjecture it suffices to consider special subvarieties $S \subset \mathsf{A}_g$ that are \emph{minimal}, in the sense that they do not contain smaller special subvarieties of positive dimension. Analyzing conditions \ref{H1}--\ref{H3} under this minimality assumption gives the following result.

\begin{MainA}
Let $g$ be an integer with $g \geq 8$. Assume there are infinitely many nonsingular complex curves of genus~$g$ whose Jacobians are CM abelian varieties. Then there exists a special subvariety $S \subset \mathsf{T}_g$ of positive dimension such that $S \cap \mathsf{T}_g^\circ \neq \emptyset$, and such that at least one of the following is true:
\begin{enumerate}[left= 0pt .. 2em,label=\textup{(\arabic*)}]
\item\label{dim1} $\dim(S) = 1$,

\item\label{dim2} $\dim(S) = 2$ and $S$ is complete,

\item\label{completeBQ} $S$ is a complete ball quotient.
\end{enumerate}
\end{MainA}

If $S$ is given by a Shimura datum $(G,X)$ then $S$ is complete if and only if $\rank_\mQ(G^\ad) = 0$.
\smallskip

An easy corollary is the following sharpening of the main result of the paper~\cite{LuZuo} by Lu and~Zuo.

\begin{cor*}
For $g \geq 8$, there are finitely many nonsingular complex hyperelliptic curves~$C$ of genus~$g$ for which $\Jac(C)$ is a CM abelian variety.
\end{cor*}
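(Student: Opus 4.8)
Suppose, aiming at a contradiction, that for some $g\ge 8$ there are infinitely many nonsingular hyperelliptic curves of genus $g$ with CM Jacobian. Write $\mathsf{M}_g$ for the coarse moduli space of nonsingular genus-$g$ curves, $\mathsf{H}_g\subset\mathsf{M}_g$ for the closed irreducible hyperelliptic locus, $\mathsf{j}\colon\mathsf{M}_g\to\mathsf{A}_g$ for the Torelli morphism, and set $\mathsf{T}^{\mathrm h}_g:=\overline{\mathsf{j}(\mathsf{H}_g)}$ (closure in $\mathsf{A}_g$) and $\mathsf{T}^{\mathrm{h}\circ}_g:=\mathsf{j}(\mathsf{H}_g)$. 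Since an indecomposable compact-type degeneration of nonsingular hyperelliptic curves is again such a curve, $\mathsf{T}^{\mathrm h}_g\subset\mathsf{T}_g$ and $\mathsf{T}^{\mathrm{h}\circ}_g=\mathsf{T}^{\mathrm h}_g\cap\mathsf{T}_g^\circ$. The plan is to run the proof of the Main Theorem inside $\mathsf{T}^{\mathrm h}_g$ rather than $\mathsf{T}_g$, and then to exploit that $\mathsf{T}^{\mathrm{h}\circ}_g$ is affine. Concretely: by Andr\'e--Oort, the Zariski closure of the CM points coming from our curves contains a positive-dimensional special subvariety $Z\subset\mathsf{T}^{\mathrm h}_g$ with $Z\cap\mathsf{T}^{\mathrm{h}\circ}_g\neq\emptyset$. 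I would then repeat the proof of the Main Theorem with $\mathsf{T}_g,\mathsf{T}_g^\circ,\mathsf{T}_g^\dec$ replaced throughout by $\mathsf{T}^{\mathrm h}_g,\mathsf{T}^{\mathrm{h}\circ}_g,\mathsf{T}^{\mathrm h}_g\setminus\mathsf{T}^{\mathrm{h}\circ}_g$; this is legitimate because $\mathsf{T}^{\mathrm h}_g$ is closed in $\mathsf{A}_g$, because Hain--de Jong--Zhang applies to any special subvariety contained in $\mathsf{T}^{\mathrm h}_g\subset\mathsf{T}_g$, and because the decomposition of the decomposable locus along the product loci $\mathsf{A}_{g_1}\times\mathsf{A}_{g_2}\hookrightarrow\mathsf{A}_g$ survives: a reducible compact-type hyperelliptic stable curve is a union of two hyperelliptic curves of smaller genus, so $\mathsf{T}^{\mathrm h}_g\cap\mathsf{T}_g^\dec\subseteq\bigcup_{g_1+g_2=g}\mathsf{T}^{\mathrm h}_{g_1}\times\mathsf{T}^{\mathrm h}_{g_2}$. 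This should yield a \emph{minimal} positive-dimensional special subvariety $S\subset\mathsf{T}^{\mathrm h}_g$ with $S\cap\mathsf{T}^{\mathrm{h}\circ}_g\neq\emptyset$ falling into one of the cases \ref{dim1}, \ref{dim2}, \ref{completeBQ}. Moreover, since each irreducible component of $S\cap\mathsf{T}_g^\dec$ is itself a special subvariety (an intersection of special subvarieties is a finite union of special subvarieties), minimality forces all such components to have dimension $0$ (a positive-dimensional one would have to equal $S$, impossible as $S$ meets $\mathsf{T}_g^\circ$); hence $S\cap\mathsf{T}_g^\dec$ is finite, of codimension $\ge 2$ in $S$ whenever $\dim S\ge 2$.

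The extra ingredient special to the hyperelliptic case is that $\mathsf{T}^{\mathrm{h}\circ}_g$ is affine. Sending a nonsingular hyperelliptic curve to the branch divisor of its double cover of $\mP^1$ identifies $\mathsf{H}_g$ with $\mathsf{M}_{0,2g+2}/S_{2g+2}$; as $\mathsf{M}_{0,2g+2}$ is affine (the complement of a hyperplane arrangement in an affine space) and the quotient of an affine variety by a finite group is affine, $\mathsf{H}_g$ is affine. The Torelli morphism is injective on points and proper onto the open Torelli locus (a family of nonsingular curves whose Jacobians extend to an indecomposable abelian variety extends to a family of nonsingular curves), hence finite onto $\mathsf{T}_g^\circ$; restricting to the closed subvariety $\mathsf{H}_g\subset\mathsf{M}_g$ gives a finite surjection $\mathsf{H}_g\to\mathsf{T}^{\mathrm{h}\circ}_g$, so by Chevalley's theorem $\mathsf{T}^{\mathrm{h}\circ}_g$ is affine; it is, in addition, closed in $\mathsf{T}_g^\circ$.

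Now I would dispose of the three cases. In cases \ref{dim2} and \ref{completeBQ}, discarding $\dim S=1$ (which is case \ref{dim1}), $S$ is complete of dimension $d\ge 2$. Since $S\subset\mathsf{T}^{\mathrm h}_g$, the set $S\cap\mathsf{T}^{\mathrm{h}\circ}_g=S\setminus(S\cap\mathsf{T}_g^\dec)$ is a closed subvariety of the affine variety $\mathsf{T}^{\mathrm{h}\circ}_g$, obtained from $S$ by deleting a closed subset of codimension $\ge 2$. Passing to the normalization $\nu\colon\widetilde S\to S$ — a finite morphism, so $\widetilde S$ is normal, complete, of dimension $d$ — the open subset $\nu^{-1}(S\cap\mathsf{T}^{\mathrm{h}\circ}_g)$ is finite over an affine variety, hence affine; but it is $\widetilde S$ with a closed subset of codimension $\ge 2$ removed, so its ring of regular functions equals $\mathcal{O}(\widetilde S)=\mC$, and an affine variety with coordinate ring $\mC$ is a point, contradicting $d\ge 2$. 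Thus only case \ref{dim1} can occur, i.e.\ $S$ is a special curve contained in the hyperelliptic Torelli locus and meeting its interior; and for $g\ge 8$ no such curve exists, by the main theorem of \cite{LuZuo}. This contradiction proves the corollary.

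The steps involving affineness and the normalization are formal, and the result of Lu and Zuo is used as a black box, so the real work lies in the first step: verifying that the proof of the Main Theorem genuinely transfers to $\mathsf{T}^{\mathrm h}_g$. The delicate point there is the inductive treatment of the decomposable locus — when a minimal special subvariety lands inside $\mathsf{T}^{\mathrm h}_g\cap\mathsf{T}_g^\dec$, one projects to a factor $\mathsf{A}_{g_1}$ and descends in genus, and one must check that this keeps the subvariety inside hyperelliptic Torelli loci and still terminates in a minimal $S$ meeting $\mathsf{T}^{\mathrm{h}\circ}_g$. I expect that bookkeeping to be the main obstacle; the genuinely new geometric content is the affineness of $\mathsf{T}^{\mathrm{h}\circ}_g$, which kills the two complete cases at once.
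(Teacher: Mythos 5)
Your proposal is correct and follows essentially the paper's own route: pass to a minimal special subvariety contained in the closed hyperelliptic Torelli locus, apply the trichotomy of the Main Theorem, use the affineness of $\mathsf{HT}_g^\circ$ to eliminate the complete cases (the paper argues that the complement of the affine part of a complete $S$ has codimension~$1$, so minimality forces $\dim(S)=1$ --- the mirror image of your codimension-$\geq 2$/normalization--Hartogs contradiction), and then invoke Lu--Zuo in dimension~$1$. The ``inductive descent in genus'' you flag as the main obstacle does not actually arise: the Hecke-translate argument of Lemma~\ref{lem:minimal} applies verbatim to special subvarieties contained in the closure of $\mathsf{HT}_g^\circ$, so the trichotomy of the Main Theorem can be quoted directly rather than re-proved relative to the hyperelliptic locus.
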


In the final section we show there exist minimal complete ball quotients of arbitrarily high dimension.

%%%%%%%%%%%%%%%%%%%%%%%%%%%%%%%%%%%%%%%%%%%%%%
%%%%%%%%%%%%%%%%%%%%%%%%%%%%%%%%%%%%%%%%%%%%%%

\section{Minimal special subvarieties}
\label{sec:minimal}

\subsection{}
\label{ssec:ShimData}
By a Shimura datum we mean a pair $(G,X)$ consisting of a reductive $\mQ$-group~$G$ and a $G(\mR)$-conjugacy class of homomorphisms $\mS \to G_\mR$ that satisfies conditions (2.1.1.1--3) in Section~2.1 of Deligne's paper~\cite{DelCorvalis}. We always assume that $G$ is the generic Mumford--Tate group on~$X$, i.e., there is no $\mQ$-subgroup $G^\prime \subsetneq G$ such that all $h \in X$ factor through~$G^\prime_\mR$.

For $h \in X$, let $M_h \subset G$ be its Mumford-Tate group, i.e., the smallest algebraic subgroup $M \subset G$ (over~$\mQ$) such that $h$ factors through~$M_\mR$. Let $Y \subset X$ be the  $M_h(\mR)$-orbit of~$h$. The pair $(M_h,Y)$ is again a Shimura datum, and $Y$ is a finite union of totally geodesic submanifolds of~$X$. The submanifolds of~$X$ that arise in this manner are called the \emph{Hodge loci} in~$X$.

Let $\Sh_K(G,X)$ be the complex Shimura variety $G(\mQ)\backslash X \times G(\Af)/K$. An irreducible algebraic subvariety $S \subset \Sh_K(G,X)$ is called a \emph{special subvariety} if there exists a Hodge locus $Y^+ \subset X$ and a class $\gamma K$ in $G(\Af)/K$ such that $S(\mC)$ is the image of $Y^+ \times \{\gamma K\} \subset X \times G(\Af)/K$ in $\Sh_K(G,X)$.

If $S \subset \Sh_K(G,X)$ is a special subvariety and $[h,\gamma K]$ is a Hodge-generic point in~$S$, then up to isomorphism the associated Shimura datum $(M_h,Y)$ only depends on~$S$. This justifies calling $(M_h,Y)$ \emph{the} Shimura datum corresponding to~$S$.

\begin{definition}
\label{def:minimal}
\begin{enumerate}
\item Let $S$ be a special subvariety of a Shimura variety $\Sh_K(G,X)$. We say that $S$ is \emph{minimal} if $\dim(S) > 0$ and $S$ contains no special subvarieties $S^\prime \subsetneq S$ of positive dimension.

\item We say that a Shimura datum $(G,X)$ is \emph{minimal} if $\dim_\mC(X) > 0$, and if for every $h \in X$ its $M_h(\mR)$-orbit is either a point or all of~$X$.
\end{enumerate}
\end{definition}

\begin{remarks}
\begin{enumerate}
\item If $S \subset \Sh_K(G,X)$ is a special subvariety and $(M,Y)$ is the corresponding Shimura datum, $S$ is minimal if and only if $(M,Y)$ is.

\item A Shimura datum $(G,X)$ is minimal if and only if the corresponding adjoint datum $(G^\ad,X^\ad)$ is minimal.

\end{enumerate}
\end{remarks}

\begin{lemma}
\label{lem:Gadsimple}
If an adjoint Shimura datum $(G^\ad,X^\ad)$ is minimal, $G^\ad$ is a $\mQ$-simple group.
\end{lemma}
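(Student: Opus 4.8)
The plan is to argue by contraposition: assume $G^\ad$ is not $\mathbb{Q}$-simple and produce a proper, positive-dimensional Hodge locus $Y^+ \subsetneq X^\ad$ through some point, contradicting minimality. Since $G^\ad$ is a semisimple adjoint $\mathbb{Q}$-group, it decomposes as a direct product $G^\ad = G_1 \times \cdots \times G_r$ of its $\mathbb{Q}$-simple factors. If $r \geq 2$, the symmetric space $X^\ad$ correspondingly decomposes as a product $X^\ad = X_1 \times \cdots \times X_r$, where $X_i$ is the $G_i(\mathbb{R})$-conjugacy class determined by projecting the homomorphisms $h \in X^\ad$ to $G_{i,\mathbb{R}}$. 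Because $G^\ad$ is assumed to be the generic Mumford–Tate group, none of the $X_i$ can be a single point (otherwise $G_i$ would be trivial as an adjoint group with trivial associated $\mathbb{S}$-action, or at least $h$ would factor through a proper subgroup), so $\dim_{\mathbb{C}}(X_i) > 0$ for every $i$.

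Now fix any $h = (h_1, \ldots, h_r) \in X^\ad$ and consider the point $h' = (h_1, h_2', \ldots, h_r')$ obtained by replacing the components $h_2, \ldots, h_r$ with the $h_i'$ corresponding to a CM point of $X_i$ — that is, choose each $h_i'$ for $i \geq 2$ so that its Mumford–Tate group in $G_i$ is a torus. Such CM points exist and are dense in each $X_i$ (this is standard for Shimura data). Then the Mumford–Tate group $M_{h'} \subset G^\ad$ is contained in $G_1 \times T_2 \times \cdots \times T_r$ where each $T_i$ is a torus; in particular the $M_{h'}(\mathbb{R})$-orbit $Y$ of $h'$ projects to a point in each factor $X_i$ with $i \geq 2$ and to at most $X_1$ in the first factor. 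Hence $\dim(Y) \leq \dim(X_1) < \dim(X^\ad)$, so $Y$ is a proper Hodge locus. On the other hand, $Y$ is not a point, because its first component is the $M_{h',1}(\mathbb{R})$-orbit of $h_1$ where $M_{h',1}$ is the image of $M_{h'}$ in $G_1$; we should choose $h_1$ Hodge-generic in $X_1$ so that $M_{h',1} = G_1$ and this orbit is all of $X_1$, which has positive dimension. Thus $Y$ is a Hodge locus with $0 < \dim(Y) < \dim(X^\ad)$, contradicting the minimality of $(G^\ad, X^\ad)$.

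The one point that needs a little care — and which I expect to be the main technical obstacle — is ruling out the degenerate situation where $G^\ad$, though not $\mathbb{Q}$-simple, has some factors that are anisotropic or otherwise behave unexpectedly, and more importantly verifying that the projection of a Mumford–Tate group to a $\mathbb{Q}$-simple factor is itself the full factor precisely when the corresponding component is Hodge-generic. This is where one uses that $(G^\ad, X^\ad)$ is an adjoint datum and that $G^\ad$ is the generic Mumford–Tate group on $X^\ad$: the projection $\pi_i \colon G^\ad \to G_i$ carries $M_h$ onto a normal $\mathbb{Q}$-subgroup of $G_i$ whose $\mathbb{R}$-points act with orbit $X_i$ through $h_i$ when $h_i$ is generic, and $\mathbb{Q}$-simplicity of $G_i$ then forces $\pi_i(M_h) = G_i$. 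Once this is in hand, the construction above goes through: choose $h_1$ generic in $X_1$, all other components CM, and the resulting Mumford–Tate orbit is genuinely intermediate between a point and all of $X^\ad$. Therefore $r = 1$, i.e., $G^\ad$ is $\mathbb{Q}$-simple.
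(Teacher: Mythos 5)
Your proposal is correct and takes essentially the same route as the paper's proof: split $(G^\ad,X^\ad)$ into a product of Shimura data (the paper uses just a two-factor decomposition), take a special/CM point in every factor but one and a Hodge-generic point in the remaining factor, and observe that the resulting $M_h(\mR)$-orbit is neither a point nor all of $X^\ad$, contradicting minimality. One minor remark: your aside that $\pi_i(M_h)$ is a \emph{normal} subgroup of $G_i$ is neither needed nor justified --- the identity $\pi_1(M_{h'})=G_1$ follows directly from the fact that the projection of a Mumford--Tate group is the Mumford--Tate group of the projected homomorphism, together with $G_1$ being the generic Mumford--Tate group on $X_1$.
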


\begin{proof} Suppose not. Then there is a decomposition of Shimura data $(G^\ad,X^\ad) = (H_1,Y_1) \times (H_2,Y_2)$ with $Y_1$ and~$Y_2$ of positive dimension. Choose a special point $y_2 \in Y_2$ and a Hodge-generic $y_1 \in Y_1$. The $M_h(\mR)$-orbit of~$(y_1,y_2)$ is $Y_1 \times \{y_2\}$, but this contradicts the minimality of $(G,X)$.
\end{proof}

\subsection{}
\label{subsec:ResFQH}
Suppose $(G^\ad,X^\ad)$ is an adjoint Shimura datum such that $G^\ad$ is $\mQ$-simple. As explained in \cite{DelCorvalis}, Section~2.3.4, we then have $G^\ad = \Res_{E/\mQ}\; H$ where $E$ is a totally real number field and $H$ is an absolutely simple adjoint group over~$E$. The set~$\Sigma$ of real embeddings of~$E$ decomposes as $\Sigma = \Sigma_\comp \coprod \Sigma_\nc$ where $\Sigma_\comp$ (resp.\ $\Sigma_\nc$) is the set of embeddings $\sigma \colon E \to \mR$ for which $H_\sigma := H \otimes_{E,\sigma} \mR$ is compact (resp.\ noncompact). The $\mQ$-rank of~$G^\ad$ is equal to the $E$-rank of~$H$.

\begin{proposition}
\label{prop:MinBallQuot}
Let $S \subset \mathsf{A}_g$ be a minimal special subvariety such that either $\dim(S) = 2$, or $S$ is a ball quotient with $\dim(S) > 1$, or $S$ is a quotient of the $3$-dimensional hermitian symmetric space of type~$\mathrm{BD\, I}$. Then $S$ is complete.
\end{proposition}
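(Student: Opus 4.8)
The plan is to read off the Shimura datum of~$S$ from the minimality hypothesis and to show that in each of the three cases the underlying group must be $\mQ$-anisotropic, which is exactly completeness. Write $(M,Y)$ for the Shimura datum of~$S$. By the Remarks following Definition~\ref{def:minimal} it is minimal, hence so is the adjoint datum $(M^\ad,Y^\ad)$, and Lemma~\ref{lem:Gadsimple} gives that $M^\ad$ is $\mQ$-simple; by~\ref{subsec:ResFQH} we may then write $M^\ad=\Res_{E/\mQ}H$ with $E$ totally real and $H$ absolutely simple adjoint over~$E$, with $\rank_\mQ(M^\ad)=\rank_E(H)$. Since $S$ is complete exactly when $\rank_\mQ(M^\ad)=0$, I would assume that $H$ is $E$-isotropic and derive a contradiction with the minimality of~$S$. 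The case $\Sigma_\comp\neq\emptyset$ is disposed of at once: for $\sigma\in\Sigma_\comp$ the group $H_\sigma$ is compact, so $\rank_E(H)\leq\rank_\mR(H_\sigma)=0$, contrary to isotropy. So from now on $\Sigma=\Sigma_\nc$, and $H_\sigma$ is noncompact — and, $Y^\ad$ being Hermitian, of Hermitian type — at every real place of~$E$.

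Under this assumption $\dim_\mC(Y^\ad)=\sum_{\sigma\in\Sigma}\dim_\mC X_\sigma$, each $X_\sigma$ an irreducible Hermitian symmetric domain attached to a noncompact real form of the absolute type of~$H$. Confronting this with the hypothesis on~$S$, the classification of the noncompact Hermitian real forms of the relevant types, and the fact that the only irreducible Hermitian symmetric domain of dimension~$2$ is the $2$-ball, one of the following occurs:
\begin{enumerate}
\item $\dim(S)=2$, $[E:\mQ]=2$ and $H$ is a form of~$A_1$, so $Y^\ad\cong\mH\times\mH$ (the remaining possibility when $\dim(S)=2$ and $Y^\ad$ is reducible);
\item $E=\mQ$, $S$ is an $n$-ball quotient with $n\geq2$, and $H=\PGU(\phi)$ for a Hermitian form~$\phi$ of rank $n+1$ and signature $(1,n)$ over an imaginary quadratic field~$K$ (covering the case $\dim(S)=2$ with $Y^\ad\cong\mathbb{B}^2$ as well as the ball-quotient hypothesis);
\item $E=\mQ$ and $S$ is a quotient of the $3$-dimensional domain of type~$\mathrm{BD\,I}$, i.e.\ of $\SO(2,3)/(\SO(2)\times\SO(3))\cong\mathrm{Sp}_4(\mR)/\UU(2)$; then $H$ is an isotropic inner form of~$\mathrm{Sp}_4$, that is, the adjoint group of $\mathrm{Sp}_4$ or of $\SU_2(D,\eta)$ for a quaternion algebra $D/\mQ$ split at~$\infty$ and a nondegenerate skew-Hermitian form~$\eta$ of rank~$2$ over~$D$.
\end{enumerate}
In each case I would exhibit a proper special subvariety of~$S$ of positive dimension; since the sub-Shimura data of $(M^\ad,Y^\ad)$ produced below lift to sub-data of $(M,Y)$, this contradicts the minimality of~$S$ and forces $H$ to be $E$-anisotropic, hence $S$ complete.

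In case~(i), $E$-isotropy forces $H\cong\PGL_{2,E}=\PGL_{2,\mQ}\otimes_\mQ E$, and the adjunction morphism $\PGL_{2,\mQ}\to\Res_{E/\mQ}\PGL_{2,E}=M^\ad$ carries the standard datum $(\PGL_{2,\mQ},\mH)$ onto the diagonal $\mH\subset\mH\times\mH$, giving a special curve in~$S$. In case~(ii), isotropy of~$H$ means $\phi$ is isotropic over~$K$; the signature being~$(1,n)$ forces the Witt index to be~$1$, so $\phi\cong\phi_0\perp\phi_{\mathrm{hyp}}$ with $\phi_{\mathrm{hyp}}$ the hyperbolic Hermitian plane and $\phi_0$ anisotropic and definite at~$\infty$. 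The image in~$H$ of $\mathrm{S}\bigl(\UU(\phi_0)\times\UU(\phi_{\mathrm{hyp}})\bigr)\subset\UU(\phi)$ then has, at~$\infty$, symmetric space~$\mH$ (coming from the $\SU(1,1)$-factor) while the $\SU(\phi_0)$-part is compact; the totally geodesic $\mH\subset\mathbb{B}^n$ thus obtained is a Hodge locus and cuts out a special curve in~$S$, proper since $n\geq2$. In case~(iii), isotropy of~$H$ makes the rank-$2$ form hyperbolic (resp.\ forces $D$ to split), so $H$ has a proper parabolic $\mQ$-subgroup which over~$\mR$ is the Siegel parabolic of $\mathrm{Sp}_4(\mR)$; the derived group of its Levi is a $\mQ$-form of~$\mathrm{SL}_2$ — namely $\mathrm{SL}_{2,\mQ}$, or the norm-one subgroup $D^1$ of~$D^\times$ — noncompact at~$\infty$, and it embeds in~$H_\mR$ via its action on a Lagrangian and the dual Lagrangian, carrying its symmetric space~$\mH$ into~$Y^\ad$; concretely, $S$ then contains a Shimura curve (the product locus, resp.\ a curve of abelian surfaces with quaternionic multiplication). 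In all cases the special subvariety has dimension $\leq2<\dim(S)$, hence is proper.

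The step I expect to be the crux is verifying, in cases~(ii) and~(iii), that the subgroups produced actually underlie \emph{sub-Shimura data}, i.e.\ that the standard cocharacter can be chosen to factor through them compatibly with the Hermitian structures, so that the relevant totally geodesic submanifolds are genuine Hodge loci and not merely totally geodesic submanifolds. For the $\mathrm{BD\,I}$ case this must be done with care: the naive block decomposition of~$H$ into orthogonal pieces fixing a hyperbolic plane does \emph{not} respect Hodge types, which is precisely why the argument should be routed through a Siegel (not Klingen) parabolic of~$\mathrm{Sp}_4$ — equivalently, through the $\mathrm{GSpin}$-picture with a rank-$3$ subspace of the correct signature — rather than through a naive orthogonal decomposition. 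Ancillary, but also needing attention, are the enumeration in the second paragraph (which invokes the classification of the Hermitian real forms of types $A_1$, $A_n$ and~$C_2$) and the identification of $\SU(\phi_{\mathrm{hyp}})$: the special unitary group of a hyperbolic Hermitian plane is a $\mQ$-isotropic, hence split, form of~$\mathrm{SL}_2$.
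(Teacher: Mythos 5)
Your strategy is the paper's: pass to the adjoint datum, invoke Lemma~\ref{lem:Gadsimple} and~\ref{subsec:ResFQH}, assume $\rank_E(H)>0$ and contradict minimality by exhibiting a proper positive-dimensional Hodge locus. Case~(i) is fine (the paper just calls the Hilbert modular surface ``well-known not to be minimal''; your diagonal $\PGL_{2,\mQ}\hookrightarrow\Res_{E/\mQ}\PGL_{2,E}$ is the standard reason). In case~(ii) there is an unjustified step: you assert $H=\PGU(\phi)$ for a hermitian form over an imaginary quadratic field, but the $\mQ$-forms of $\PGU(n,1)$ also include $\PGU(V,\phi)$ with $V$ a module over a division algebra $\Delta$ of degree $>1$ carrying an involution of the second kind --- the anisotropic ones of this shape are exactly the minimal complete ball quotients of the paper's Section~4, so this cannot be dismissed by the classification of \emph{real} forms. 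You need the paper's one-line argument: positive $\mQ$-rank gives positive Witt index, hence $\rank_\mR\geq\deg(\Delta)$, and real rank $1$ forces $\Delta=K$. With that inserted, your hyperbolic-plane splitting does produce a special curve in the $n$-ball; it is a legitimate variant of the paper's codimension-one sub-ball.

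The genuine gap is case~(iii). The subgroup you propose --- the derived group $\mathrm{SL}_2$ (resp.\ $D^1$) of the Levi of a Siegel parabolic, acting on a Lagrangian $L$ and its dual --- does \emph{not} underlie a sub-Shimura datum: no point $h$ of the genus-$2$ Siegel space factors through the stabilizer of a Lagrangian, since if $L_\mR$ is $h(\mS)$-stable then $\psi(x,h(i)x)=0$ for all $x\in L_\mR$ (where $\psi$ is the symplectic form), contradicting positivity of the polarization. Equivalently, the hermitian part of that Levi is trivial (its boundary component is the $0$-dimensional cusp), and the copy of $\mathrm{SL}_2(\mR)/\SO(2)$ embedded via $g\mapsto\diag(g,{}^tg^{-1})$ sits in the Siegel threefold as a totally geodesic but totally \emph{real} surface, not a complex curve, hence is no Hodge locus and gives no contradiction with minimality. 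Your ``crux'' remark routes the argument in exactly the wrong direction: the fix is not a parabolic Levi but the paper's construction --- a rational $3$-dimensional subspace $W\subset V$ of signature $(2,1)$ in the $5$-dimensional orthogonal picture, whose very general points give a $1$-dimensional $M_h(\mR)$-orbit (in the $\mathrm{Sp}_4$-picture this is a rational decomposition into two symplectic planes, or the centralizer $D^1$ of a quaternion algebra acting on the $4$-dimensional space, i.e.\ product or QM curves, neither of which lies in a parabolic Levi). Two smaller slips in the same case: for type $\mathrm{C}_2$ the relevant rank-$2$ forms over a quaternion algebra are hermitian, not skew-hermitian, with respect to the canonical involution (skew-hermitian rank $2$ gives type $\mathrm{D}_2$); and isotropy does not force $D$ to split --- $\PGU_2(D,h)$ with $D$ an indefinite division algebra and $h$ hyperbolic is isotropic of $\mQ$-rank $1$ and must also be handled, which the orthogonal-picture argument does uniformly.
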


The argument in fact works in many more cases but these are the only cases we need.

\begin{proof}
Suppose $S$ is not complete. Let $(G,X)$ be corresponding Shimura datum. By Lemma~\ref{lem:Gadsimple}, $G^\ad$ is $\mQ$-simple. With notation $G^\ad = \Res_{E/\mQ}\; H$ as above, the assumption that $S$ is not complete means that $\rank_\mQ(G^\ad) = \rank_E(H)$ is positive. Therefore, all factors $H_\sigma$ are non-compact.

First assume $S$ is a ball quotient. Then $G^\ad_\mR$ has only one noncompact factor, so we must have $E = \mQ$ and $G^\ad_\mR \cong \PGU(n,1)$ with $n = \dim(S)$. There then exist:
\begin{itemize}
\item a division algebra~$\Delta$ with an involution $x \mapsto \bar{x}$ of the second kind, such that the centre $K = Z(D)$ is an imaginary quadratic field,
\item a right $\Delta$-module~$V$ of finite type,
\item a hermitian form $\phi \colon V \times V \to \Delta$  with respect to the given involution,
\end{itemize}
such that $G^\ad \cong \PGU(V,\phi)$. Because $G^\ad$ has positive rank, the Witt index of~$\phi$ is positive. The rank of~$G^\ad_\mR$ is then at least the degree of~$\Delta$. But we know that $\rank(G^\ad_\mR) = 1$, so it follows that $\Delta = K$ is an imaginary quadratic field. The domain~$X^\ad$ can be identified with the space of all negative lines in the $\mC$-vector space $V \otimes_\mQ \mR$ with respect to~$\phi$. The assertion now follows from the remark that this Shimura datum $(G^\ad,X^\ad)$ is minimal only if $n=1$. Indeed, if $n>1$ we can choose an $n$-dimensional $K$-subspace $W \subsetneq V$ such that $\phi|_W$ is nondegenerate and again has Witt index~$1$. If $L \subset W \otimes_\mQ \mR$ is a very general negative line and $M_h$ is the Mumford--Tate group of the corresponding point $h \in X^\ad$, the $M_h(\mR)$-orbit of~$h$ has dimension~$n-1$, in contradiction with the minimality of~$S$.

The case where $S$ is a quotient of the $3$-dimensional space of type~$\mathrm{BD\, I}$ is similar. The assumption that $S$ is not complete implies that $E=\mQ$ and $G^\ad$ is absolutely imple. In this case, $G^\ad = \SO(V,\phi)$ where $V$ is a $5$-dimensional $\mQ$-vector space and $\phi$ is a symmetric bilinear form of Witt index~$>0$ (as otherwise $S$ would be complete), of signature $(1,4)$ or $(4,1)$. Further, $X^\ad$ is biholomorphic to the $3$-dimensional complex manifold of isotropic lines $\mC \cdot v \subset V_\mC$ such that $\phi(v,\bar{v}) \neq 0$. Choose a $3$-dimensional subspace $W \subset V$ such that $\phi$ is indefinite on~$W_\mR$. If $L \subset W_\mC$ is a very general isotropic line, the Mumford--Tate group~$M_h$ of the corresponding point $h \in X^\ad$ is isomorphic to $\SO(W,\phi|_W)$, and the $M_h(\mR)$-orbit of~$h$ is $1$-dimensional. By minimality of~$S$ this case is therefore excluded.

Finally, suppose $\dim(S) = 2$. As before, we have $G^\ad = \Res_{E/\mQ}\; H$ and if $S$ is not complete, all factors~$H_\sigma$ are noncompact. The case where $S$ is a ball quotient has already been dealt with. The only other possibility is that $[E:\mQ] = 2$ and that $H$ is of Lie type~$A_1$. Because $\rank_E(H) > 0$ we then have $H = \PGL_{2,E}$ and $(G^\ad,X^\ad)$ is (the adjoint datum of) the Hilbert modular Shimura datum in genus~$2$, which is well-known not to be minimal.
\end{proof}

\section{Special subvarieties with large Baily--Borel boundary}

\subsection{}
\label{ssec:PairsHY}
As in \cite{DelCorvalis}, section~1.2, we consider pairs $(H,Y)$ consisting of a simple adjoint group~$H$ over~$\mR$ and an $H(\mR)$-conjugacy class of non-trivial homomorphisms $\mS \to H$ such that the adjoint representation gives $\Lie(H)$ a Hodge structure of type $(-1,1) + (0,0) + (1,-1)$, and such that $\mathrm{Inn}\bigl(h(i)\bigr)$ is a Cartan involution.

As explained in loc.\ cit., $Y$ corresponds to a special vertex in the Dynkin diagram of~$H_\mC$. Below we list, for $H$ of Lie type A--D, the possibilities for the Satake diagram of~$H$ and the special vertex (circled). This merges the information contained in \cite{DelCorvalis}, Table~1.3.9, with the classification results in \cite{Helgason}, Chapter~10.

The proper boundary components of~$Y$ in its Baily-Borel compactification $Y \subset Y^*$ are given by the maximal $\mR$-parabolic subgroups $P \subset H$. The conjugacy class of~$P$ corresponds to a $\Gal(\mC/\mR)$-orbit~$I_P$ of white vertices in the Satake diagram~$\Delta$. If $I_P$ contains the special vertex, the boundary component $Y_P \subset Y^*$ corresponding to the conjugacy class of~$P$ is a finite set of points. Otherwise, consider the connected component of $\Delta\setminus I_P$ that contains the special vertex; the dimension of~$Y_P$ is then the dimension (as a complex manifold) of the corresponding hermitian symmetric space. We list those cases in which $Y_P$ has codimension at most~$2$ in~$Y^*$.
\bigskip\goodbreak

\noindent
\textbf{Type} $\bm{\mathrm{A}_\ell(p,q)}$; $\bm{\ell = p+q-1\geq 1}$. Special vertex: $p$ or~$q$; $\dim(Y)=pq$.

\begin{center}
\begin{tikzpicture}
\draw[fill=white] (3,0) circle (.15);
\draw[fill=white] (3,-2) circle (.15);

\draw (0,0) -- (1.6,0);
\draw (2.4,0) -- (4,0);
\draw (4,0) -- (4,-.6);
\draw (4,-1.4) -- (4,-2);
\draw (0,-2) -- (1.6,-2);
\draw (2.4,-2) -- (4,-2);

\draw[fill=white] (0,0) circle (.1);
\draw[fill=white] (1,0) circle (.1);
\draw[fill=white] (3,0) circle (.1);
\draw[fill=black] (4,0) circle (.1);
\draw[fill=white] (0,-2) circle (.1);
\draw[fill=white] (1,-2) circle (.1);
\draw[fill=white] (3,-2) circle (.1);
\draw[fill=black] (4,-2) circle (.1);

\node at (2.05,0) {$\cdots$};
\node at (2.05,-2) {$\cdots$};
\node at (4,-.9) {$\vdots$};

\node at (0,.4) {$\scriptstyle 1$};
\node at (1,.4) {$\scriptstyle 2$};
\node at (3,.4) {$\scriptstyle p$};
\node at (4,.4) {$\scriptstyle p+1$};
\node at (0,-2.5) {$\scriptstyle \ell$};
\node at (1,-2.5) {$\scriptstyle \ell-1$};
\node at (3,-2.5) {$\scriptstyle q$};
\node at (4,-2.5) {$\scriptstyle q-1$};

\draw[<->,in=110,out=-110] (-.05,-.2) to (-.05,-1.8);
\draw[<->,in=110,out=-110] (0.95,-.2) to (0.95,-1.8);
\draw[<->,in=110,out=-110] (2.95,-.2) to (2.95,-1.8);

\node at (2,-3.5) {$1 \leq p < q \leq \ell$};

%% rechterplaatje
\draw[fill=white] (10.4,-1) circle (.15);

\draw (7,0) -- (8.6,0);
\draw (9.4,0) -- (10,0);
\draw (10,0) -- (10.4,-1);
\draw (10.4,-1) -- (10,-2);
\draw (7,-2) -- (8.6,-2);
\draw (9.4,-2) -- (10,-2);

\draw[fill=white] (7,0) circle (.1);
\draw[fill=white] (8,0) circle (.1);
\draw[fill=white] (10,0) circle (.1);
\draw[fill=white] (10.4,-1) circle (.1);
\draw[fill=white] (7,-2) circle (.1);
\draw[fill=white] (8,-2) circle (.1);
\draw[fill=white] (10,-2) circle (.1);

\node at (9.05,0) {$\cdots$};
\node at (9.05,-2) {$\cdots$};

\node at (7,.4) {$\scriptstyle 1$};
\node at (8,.4) {$\scriptstyle 2$};
\node at (10,.4) {$\scriptstyle p-1$};
\node at (10.8,-1.05) {$\scriptstyle p$};
\node at (7,-2.5) {$\scriptstyle \ell$};
\node at (8,-2.5) {$\scriptstyle \ell-1$};
\node at (10,-2.5) {$\scriptstyle p+1$};

\draw[<->,in=110,out=-110] (6.95,-.2) to (6.95,-1.8);
\draw[<->,in=110,out=-110] (7.95,-.2) to (7.95,-1.8);
\draw[<->,in=110,out=-110] (9.95,-.2) to (9.95,-1.8);

\node at (8.5,-3.5) {$p=q$, $\ell=2p-1$};
\end{tikzpicture}
\end{center}

\noindent
The $\Gal(\mC/\mR)$-orbit $\{i,\ell+1-i\}$ for $1\leq i \leq p$ gives a boundary component of type $\mathrm{A}_{\ell-2i}(p-i,q-i)$. For $p=q$ we additionally have the Galois-orbit $\{p\}$, which gives a $0$-dimensional boundary component.

Boundary components of codimension~$1$: only for $p=q=1$, in which case $\dim(Y)=1$ and the boundary component is $0$-dimensional.

Boundary components of codimension~$2$: only for $p=1$ and $q=2$, in which case $\dim(Y)=2$ and we have a $0$-dimensional boundary component.

\bigskip\goodbreak

\noindent
\textbf{Type} $\bm{\mathrm{B}_\ell}$; $\bm{\ell \geq 2}$. Special vertex: $1$; trivial $\Gal(\mC/\mR)$-action; $\dim(Y)=2\ell-1$.

\begin{center}
\begin{tikzpicture}

\draw[fill=white] (0,0) circle (.15);
\draw (0,-.07) -- (1,-.07);
\draw (0,.07) -- (1,.07);
\draw (.4,0.2) -- (.6,0) -- (.4,-0.2);

\draw[fill=white] (0,0) circle (.1);
\draw[fill=white] (1,0) circle (.1);

\node at (0,.4) {$\scriptstyle 1$};
\node at (1,.4) {$\scriptstyle 2$};
\node at (.5,-1) {$\ell=2$};

%% tweede plaatje
\draw[fill=white] (4,0) circle (.15);
\draw (4,0) -- (5,0);
\draw (5,-.07) -- (6,-.07);
\draw (5,.07) -- (6,.07);
\draw (5.4,0.2) -- (5.6,0) -- (5.4,-0.2);

\draw[fill=white] (4,0) circle (.1);
\draw[fill=white] (5,0) circle (.1);
\draw[fill=black] (6,0) circle (.1);

\node at (4,.4) {$\scriptstyle 1$};
\node at (5,.4) {$\scriptstyle 2$};
\node at (6,.4) {$\scriptstyle 3$};
\node at (5,-1) {$\ell=3$};

%% derde plaatje
\draw[fill=white] (9,0) circle (.15);
\draw (9,0) -- (11.6,0);
\draw(12.4,0) -- (14,0);
\draw (14,-.07) -- (15,-.07);
\draw (14,.07) -- (15,.07);
\draw (14.4,0.2) -- (14.6,0) -- (14.4,-0.2);

\draw[fill=white] (9,0) circle (.1);
\draw[fill=white] (10,0) circle (.1);
\draw[fill=black] (11,0) circle (.1);
\draw[fill=black] (13,0) circle (.1);
\draw[fill=black] (14,0) circle (.1);
\draw[fill=black] (15,0) circle (.1);

\node at (12.05,0) {$\cdots$};

\node at (9,.4) {$\scriptstyle 1$};
\node at (10,.4) {$\scriptstyle 2$};
\node at (11,.4) {$\scriptstyle 3$};
\node at (13,.4) {$\scriptstyle \ell-2$};
\node at (14,.4) {$\scriptstyle \ell-1$};
\node at (15,.4) {$\scriptstyle \ell$};
\node at (12,-1) {$\ell\geq 4$};
\end{tikzpicture}
\end{center}

\noindent
Boundary components of codimension~$2$: only for $\ell=2$, in which case $\dim(Y)=3$ and we have a $1$-dimensional boundary component corresponding to the Galois-orbit $\{2\}$.
\bigskip\goodbreak

\noindent
\textbf{Type} $\bm{\mathrm{C}_\ell}$; $\bm{\ell \geq 3}$. Special vertex: $\ell$; trivial $\Gal(\mC/\mR)$-action; $\dim(Y)=\ell(\ell+1)/2$.

\begin{center}
\begin{tikzpicture}
\draw[fill=white] (5,0) circle (.15);
\draw (0,0) -- (1.6,0);
\draw(2.4,0) -- (4,0);
\draw (4,-.07) -- (5,-.07);
\draw (4,.07) -- (5,.07);
\draw (4.6,0.2) -- (4.4,0) -- (4.6,-0.2);

\draw[fill=white] (0,0) circle (.1);
\draw[fill=white] (1,0) circle (.1);
\draw[fill=white] (3,0) circle (.1);
\draw[fill=white] (4,0) circle (.1);
\draw[fill=white] (5,0) circle (.1);

\node at (2.05,0) {$\cdots$};

\node at (0,.4) {$\scriptstyle 1$};
\node at (1,.4) {$\scriptstyle 2$};
\node at (3,.4) {$\scriptstyle \ell-2$};
\node at (4,.4) {$\scriptstyle \ell-1$};
\node at (5,.4) {$\scriptstyle \ell$};
\end{tikzpicture}
\end{center}

\noindent
There are no cases that give a boundary component of codimension $\leq 2$.
\bigskip\goodbreak

\noindent
\textbf{Type} $\bm{\mathrm{D}_\ell^{\mR}}$; $\bm{\ell \geq 4}$. Special vertex: $1$;  trivial $\Gal(\mC/\mR)$-action; $\dim(Y)=2\ell-2$.

\begin{center}
\begin{tikzpicture}
\draw[fill=white] (9,0) circle (.15);
\draw (9,0) -- (11.6,0);
\draw(12.4,0) -- (14,0);
\draw (14,0) -- (14.8,.8);
\draw (14,0) -- (14.8,-.8);

\draw[fill=white] (9,0) circle (.1);
\draw[fill=white] (10,0) circle (.1);
\draw[fill=black] (11,0) circle (.1);
\draw[fill=black] (13,0) circle (.1);
\draw[fill=black] (14,0) circle (.1);
\draw[fill=black] (14.8,.8) circle (.1);
\draw[fill=black] (14.8,-.8) circle (.1);

\node at (12.05,0) {$\cdots$};

\node at (9,.4) {$\scriptstyle 1$};
\node at (10,.4) {$\scriptstyle 2$};
\node at (11,.4) {$\scriptstyle 3$};
\node at (13,.4) {$\scriptstyle \ell-3$};
\node at (13.9,.4) {$\scriptstyle \ell-2$};
\node at (15.35,.8) {$\scriptstyle \ell-1$};
\node at (15.1,-.8) {$\scriptstyle \ell$};
\end{tikzpicture}
\end{center}

\noindent
There are no cases that give a boundary component of codimension $\leq 2$.
\bigskip\goodbreak

\noindent
\textbf{Type} $\bm{\mathrm{D}_\ell^{\mH}}$; $\bm{\ell \geq 5}$. Special vertex: $\ell$;  $\dim(Y)=\ell(\ell-1)/2$.

\begin{center}
\begin{tikzpicture}
\draw[fill=white] (5.8,-.8) circle (.15);
\draw (0,0) -- (2.6,0);
\draw(3.4,0) -- (5,0);
\draw (5,0) -- (5.8,.8);
\draw (5,0) -- (5.8,-.8);

\draw[fill=black] (0,0) circle (.1);
\draw[fill=white] (1,0) circle (.1);
\draw[fill=black] (2,0) circle (.1);
\draw[fill=white] (4,0) circle (.1);
\draw[fill=black] (5,0) circle (.1);
\draw[fill=white] (5.8,.8) circle (.1);
\draw[fill=white] (5.8,-.8) circle (.1);

\node at (3.05,0) {$\cdots$};

\node at (0,.4) {$\scriptstyle 1$};
\node at (1,.4) {$\scriptstyle 2$};
\node at (2,.4) {$\scriptstyle 3$};
\node at (4,.4) {$\scriptstyle \ell-3$};
\node at (4.9,.4) {$\scriptstyle \ell-2$};
\node at (5.8,1.1) {$\scriptstyle \ell-1$};
\node at (5.8,-1.2) {$\scriptstyle \ell$};
\node at (3,-1) {$\ell$ odd};

\draw[<->,in=60,out=-60] (5.95,.6) to (5.95,-.6);

%% tweede plaatje
\draw[fill=white] (14.8,-.8) circle (.15);
\draw (9,0) -- (11.6,0);
\draw(12.4,0) -- (14,0);
\draw (14,0) -- (14.8,.8);
\draw (14,0) -- (14.8,-.8);

\draw[fill=black] (9,0) circle (.1);
\draw[fill=white] (10,0) circle (.1);
\draw[fill=black] (11,0) circle (.1);
\draw[fill=black] (13,0) circle (.1);
\draw[fill=white] (14,0) circle (.1);
\draw[fill=black] (14.8,.8) circle (.1);
\draw[fill=white] (14.8,-.8) circle (.1);

\node at (12.05,0) {$\cdots$};

\node at (9,.4) {$\scriptstyle 1$};
\node at (10,.4) {$\scriptstyle 2$};
\node at (11,.4) {$\scriptstyle 3$};
\node at (13,.4) {$\scriptstyle \ell-3$};
\node at (13.9,.4) {$\scriptstyle \ell-2$};
\node at (14.8,1.1) {$\scriptstyle \ell-1$};
\node at (14.8,-1.2) {$\scriptstyle \ell$};
\node at (12,-1) {$\ell$ even};

\end{tikzpicture}
\end{center}

\noindent
There are no cases that give a boundary component of codimension $\leq 2$.

\subsection{}
Let $(G,X)$ be an adjoint Shimura datum of abelian type such that the group~$G$ is $\mQ$-simple. As in~\ref{subsec:ResFQH} we have $G = \Res_{E/\mQ}\, H$ for some totally real number field~$E$ and an absolutely simple adjoint group~$H$ over~$E$. The assumption that $(G,X)$ is of abelian type implies that $H$ is of Lie type A, B, C or~D. Let $\Sigma = \Sigma_\comp \coprod \Sigma_\nc$ be the set of real embeddings of~$E$. Then $X = \prod_{\sigma \in \Sigma_\nc}\, Y_\sigma$, where $Y_\sigma$ is an $H_\sigma(\mR)$-conjugacy class of homomorphisms $\mS \to H_\sigma$.

Suppose $\rank_\mQ(G) = \rank_E(H)$ is positive. All factors~$H_\sigma$ are then non-compact and $\dim(Y_\sigma) > 0$ for every $\sigma \in \Sigma$. Each pair $(H_\sigma,Y_\sigma)$ is of the type considered in~\ref{ssec:PairsHY}. Note that all $(H_\sigma,Y_\sigma)$ are of the same type $\mathrm{T}_\ell$ with $\mathrm{T} \in \{\mathrm{A},\mathrm{B},\mathrm{C},\mathrm{D}\}$ and $\ell \geq 1$, but that for type~$\mathrm{A}_\ell$ the parameters $(p,q)$ may depend on~$\sigma$. For $K \subset G(\Af)$ a compact open subgroup, the boundary components of~$\Sh_K(G,X)$ in the Baily-Borel compactification are given by the maximal $\mQ$-parabolic subgroups of~$G$. Any such is of the form $\Res_{E/\mQ}\, P$ with $P \subset H$ a maximal $E$-parabolic subgroup. For each $\sigma \in \Sigma$ the parabolic subgroup $P_\sigma \subset H_\sigma$ gives rise to a boundary component $Y_{P,\sigma} \subset Y_\sigma^*$, and the rational boundary component of the Baily-Borel compactification $X \subset X^*$ that corresponds to~$P$ is $\prod_{\sigma \in \Sigma}\, Y_{P,\sigma}$. Inspecting the tables in~\ref{ssec:PairsHY}, we find that the Baily-Borel compactification of~$X$ (equivalently: of $\Sh_K(G,X)$) can have a non-empty proper boundary component of codimension~$\leq 2$ only in the following cases:
\begin{enumerate}[left= 0pt .. 3em,label=\textup{(B\arabic*)}]
\item\label{caseA1} $[E:\mQ] \leq 2$ and $H = \PGL_{2,F}$;
\item\label{caseA2} $E=\mQ$ and $H=G$ is of type $\mathrm{A}_2(1,2)$;
\item\label{caseB2} $E=\mQ$ and $H=G$ is of type $\mathrm{B}_2$.
\end{enumerate}

\section{Proofs of the main results}

\subsection{}
\label{subsec:ST}
Let $\mathsf{T}_g \subset \mathsf{A}_g$ be the Torelli locus. Let $\mathsf{T}_g^\circ \subset \mathsf{T}_g$ be the open Torelli locus, i.e., the locus of Jacobians of smooth curves. The complement $\mathsf{T}_g^\dec \subset \mathsf{T}_g$ is the locus of decomposable Jacobians. We are interested in closed irreducible subvarieties $S \subset \mathsf{A}_g$ that satisfy the following condition:
\begin{description}[style=sameline,leftmargin=1.5em]%[left= 0pt .. 1.5\parindent]
\item[(ST)] $S$ is a special subvariety of positive dimension, with $S \subset \mathsf{T}_g$ and $S \cap \mathsf{T}_g^\circ \neq \emptyset$
\end{description}
(ST for Special subvariety in the Torelli locus.)

\begin{lemma}
\label{lem:minimal}
Let $S \subset \mathsf{A}_g$ be a special subvariety that satisfies\/~\textup{(ST)} and such there is no $S^\prime \subsetneq S$ that again satisfies\/~\textup{(ST)}. Then $S$ is minimal.
\end{lemma}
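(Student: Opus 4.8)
The plan is to argue by contradiction: assuming $S$ is \emph{not} minimal, I will produce a proper special subvariety of~$S$ that again satisfies~(ST), contradicting the hypothesis. So suppose $S$ is not minimal. Let $(M,Y)$ be the Shimura datum corresponding to~$S$, so $M$ is the (connected) generic Mumford--Tate group on~$Y$; fix a connected component $Y^+ \subset Y$ and a class $\gamma K$ with $S$ the image of $Y^+ \times \{\gamma K\}$ in~$\mathsf{A}_g$, and write $M(\mR)^+$ for the identity component of the real Lie group $M(\mR)$ and $M(\mQ)^+ = M(\mQ) \cap M(\mR)^+$. Since $S$ is not minimal there is a special subvariety $S^\prime \subsetneq S$ with $\dim(S^\prime) > 0$; realizing it inside the uniformization $Y^+ \to S$, it is the image of $Y^{\prime+} \times \{\gamma K\}$ for a connected component $Y^{\prime+} \subseteq Y^+$ of a Hodge locus, corresponding to a reductive $\mQ$-subgroup $M^\prime \subseteq M$.

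The main step is to translate $S^\prime$ around inside~$S$. For $m \in M(\mQ)^+$ the pair $(mM^\prime m^{-1}, mY^\prime)$ is again a Shimura sub-datum of $(M,Y)$, $mY^{\prime+}$ is a connected component of a Hodge locus contained in~$Y^+$ (the Mumford--Tate group is equivariant), and so the image $S^\prime_m$ of $mY^{\prime+} \times \{\gamma K\}$ is a special subvariety of~$S$ with $0 < \dim(S^\prime_m) = \dim(S^\prime) < \dim(S)$. The key point is that the sets $S^\prime_m$ sweep out a dense subset of~$S$: fixing $y_0 \in Y^{\prime+}$, transitivity of the $M(\mR)^+$-action on~$Y^+$ writes an arbitrary $y \in Y^+$ as $y = g \cdot y_0$ with $g \in M(\mR)^+$, and real approximation (valid since $M$ is connected, giving $M(\mQ)^+$ dense in $M(\mR)^+$) produces $m_n \in M(\mQ)^+$ with $m_n \to g$, whence $m_n \cdot y_0 \to y$ by continuity of the action; thus $\bigcup_m mY^{\prime+}$ is dense in~$Y^+$, and therefore $\bigcup_m S^\prime_m$ is dense in~$S$.

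To finish, I invoke~(ST): since $S$ is irreducible and $S \cap \mathsf{T}_g^\circ = S \setminus \mathsf{T}_g^\dec$ is a non-empty Zariski-open subset of~$S$, it is dense in~$S$, hence it meets the dense set $\bigcup_m S^\prime_m$. Choosing $m$ with $S^\prime_m \cap \mathsf{T}_g^\circ \neq \emptyset$, we obtain a special subvariety $S^\prime_m$ of positive dimension with $S^\prime_m \subseteq S \subseteq \mathsf{T}_g$ and $S^\prime_m \cap \mathsf{T}_g^\circ \neq \emptyset$, i.e.\ satisfying~(ST), while $S^\prime_m \subsetneq S$ --- contradicting the hypothesis. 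Hence $S$ is minimal. The only delicate point, and the main obstacle such as it is, is the density of the family $\{S^\prime_m\}$ in~$S$: one must take care to translate only by elements of $M(\mR)^+$ so that the translated component stays inside the chosen~$Y^+$, and to combine real approximation for the connected $\mQ$-group~$M$ with the transitivity and continuity of the $M(\mR)^+$-action on~$Y^+$.
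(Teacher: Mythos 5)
Your argument is correct and is essentially the paper's proof: the paper translates $S^\prime$ by Hecke correspondences and invokes the density of these translates in $S$, which is exactly the fact you establish by hand via real approximation for $M(\mQ)^+$ acting through the uniformization $Y^+ \to S$, before concluding as the paper does that some translate meets $\mathsf{T}_g^\circ$ (a non-empty Zariski-open, hence analytically open, subset of $S$) and so satisfies (ST).
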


\begin{proof}
If $S$ is not minimal, there exists a special subvariety $S^\prime \subsetneq S$ of positive dimension. Every Hecke translate of~$S^\prime$ is again a special subvariety of~$S$. These Hecke translates lie dense in~$S$, so there certainly exists one that is not fully contained in the boundary of~$\mathsf{T}_g$; but this contradicts the assumption.
\end{proof}

\subsection{} \emph{Proof of the Main Theorem.}
Suppose there exist infinitely many curves (complete, nonsingular) of genus~$g$ whose Jacobians are CM abelian varieties. By Tsimerman's theorem (formerly the Andr\'e--Oort conjecture), there then exists a special subvariety $S \subset \mathsf{A}_g$ that satisfies condition~(ST). By Lemma~\ref{lem:minimal} we may assume $S$ is minimal. As $G^\ad$ is then $\mQ$-simple, at least one of the conditions \ref{H1}--\ref{H3} in the theorem of Hain and de Jong--Zhang holds. (See the introduction.) We check what happens in these cases:

Case \ref{H1}: By Proposition~\ref{prop:MinBallQuot}, either $\dim(S) = 1$ or $S$ is a complete ball quotient, so either~\ref{dim1} or~\ref{completeBQ} in the theorem holds.

Case \ref{H2}: The irreducible components of $S \cap \mathsf{T}_g^\dec$ are again special subvarieties, so by minimality of~$S$ these are points; hence $\dim(S) \leq 2$. By Proposition~\ref{prop:MinBallQuot}, either~\ref{dim1} or~\ref{dim2} in the theorem holds.

Case \ref{H3}: This means that $S$ is not complete and we are in one of the cases \ref{caseA1}, \ref{caseA2} or~\ref{caseB2} at the end of the previous section. In case~\ref{caseA1} we have $\dim(S) \leq 2$, whereas in case~\ref{caseA2} $S$ is a ball quotient; these cases have already been covered. Case~\ref{caseB2}, finally, is excluded by Proposition~\ref{prop:MinBallQuot}. \qed

\subsection{} \emph{Proof of the Corollary.}
Suppose there are infinitely many hyperelliptic curves of genus~$g$ with a CM Jacobian. This gives us a special subvariety $S \subset \mathsf{A}_g$ satisfying~(ST) such that $S$ is contained in the closure of the hyperelliptic locus $\mathsf{HT}_g^\circ \subset \mathsf{T}_g^\circ$. We may assume~$S$ to be minimal, in which case it is of one of the three types \ref{dim1}--\ref{completeBQ} in the Main Theorem. First assume $S$ is complete. Because $\mathsf{HT}_g^\circ$ is affine, $S^\dec = S \cap \mathsf{T}_g^\dec$ has codimension~$1$ in~$S$. But $S^\dec$ is again a special subvariety, so by minimality we conclude that $\dim(S) = 1$. Now use that the case $\dim(S) = 1$ has been excluded by Lu and Zuo in their paper~\cite{LuZuo}. \qed

\section{The existence of minimal complete ball quotients}

As a complex ball (equivalently: a complex hyperbolic space) has totally geodesic complex submanifolds in every codimension, it is tempting to guess that ball quotients of dimension~$>1$ are not minimal. We here show that this is not true: there exist minimal complete ball quotients of arbitrarily large dimension.

\subsection{}
Let $p$ be a prime number. Let $K$ be an imaginary quadratic field, and let $\Delta$ be a division algebra of degree~$p$ over~$K$, equipped with an involution~$\tau$ of the second kind. We choose these data such that there exists an isomorphism $\alpha \colon \Delta \otimes_\mQ \mR \isomarrow M_p(\mC)$ via which $\tau$ corresponds to the adjoint involution of the hermitian form $x_1 \bar{y}_1 + \cdots + x_{p-1} \bar{y}_{p-1} - x_p \bar{y}_p$. Let $G = \GU(\Delta,\tau)$ be the group of unitary similitudes; see for instance \cite{BookInv}, \S 23. The isomorphism~$\alpha$ induces an isomorphism $G_\mR \cong \GU(p-1,1)$, via which we identify the two groups. Let $h \colon \mS \to G_\mR$ be the homomorphism that sends $z \in \mC^* = \mS(\mR)$ to the diagonal matrix $\diag(z,\ldots,z,\bar{z})$, and let $X \subset \Hom(\mS,G_\mR)$ be the $G(\mR)$-conjugacy class of~$h$. The pair $(G,X)$ is a Shimura datum of Hodge type. We identify~$X$ with the complex hyperbolic space $\mathbf{H}^{p-1}_\mC \subset \mP^{p-1}(\mC)$ through the map that sends~${}^gh$, for $g \in G(\mR)$, to the negative line in~$\mC^p$ spanned by the vector $g \cdot (0,\ldots,0,1)$. If $S \subset \mathsf{A}_g$ is a special subvariety with Shimura datum isomorphic to $(G,X)$ then $S$ is complete, as $\rank_\mQ(G^\ad) = 0$.

\begin{proposition}
The Shimura datum $(G,X)$ is minimal.
\end{proposition}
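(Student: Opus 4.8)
The plan is to show that for every $h' \in X$, the $M_{h'}(\mathbb{R})$-orbit of $h'$ is either a point or all of $X$. Since $X$ is a single $G(\mathbb{R})$-conjugacy class, it suffices to work with our base point $h$ and a general point; equivalently, I will show that if the Mumford--Tate group $M_h$ of a point $h \in X$ is a proper $\mathbb{Q}$-subgroup of $G$, then the $M_h(\mathbb{R})$-orbit of $h$ is a point. The key structural input is that $\Delta$ is a division algebra of \emph{prime} degree $p$ over its center $K$; this severely limits the subfields and sub-skewfields of $\Delta$, and hence the possible Mumford--Tate groups.

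The main steps I would carry out are as follows. First, recall that $G = \GU(\Delta,\tau)$ with $\Delta$ of degree $p$ over the imaginary quadratic field $K$, and $G_\mathbb{R} \cong \GU(p-1,1)$, so $G^\ad_\mathbb{R} \cong \PGU(p-1,1)$ has exactly one noncompact factor and the associated symmetric space is the ball $\mathbf{H}^{p-1}_\mathbb{C}$. Second, analyze $M := M_h$ for an arbitrary $h \in X$: its derived group $M^\der$ is a $\mathbb{Q}$-subgroup of $G^\der = \SU(\Delta,\tau)$, and the $M(\mathbb{R})$-orbit of $h$ (a totally geodesic submanifold of the ball) has positive dimension precisely when $M^\ad_\mathbb{R}$ has a noncompact factor of positive-dimensional associated Hermitian symmetric domain. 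So I must show: if $M^\ad_\mathbb{R}$ has such a factor, then $M = G$. Third, this is where the prime degree enters. The centralizer considerations show that $M^\der$, being reductive, sits inside $\SU(\Delta,\tau)$ and its centralizer in $\Delta$ (or rather, the relevant endomorphism considerations on the underlying $K$-vector space of dimension $p$) is a semisimple $\mathbb{Q}$-algebra $B$ stable under $\tau$; the Hodge structure cuts $B \otimes \mathbb{R}$ into pieces, and the orbit is a point iff the Hodge structure on the relevant part is of CM type. A proper $M$ forces a nontrivial such $B$, i.e., a nontrivial decomposition of the $p$-dimensional $\Delta$-acting structure — but $\Delta$ is a division algebra of prime degree, so its only subfields containing $K$ are $K$ itself and degree-$p$ maximal subfields, and there is no room for an intermediate idempotent decomposition compatible with the rank-one unitary structure. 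Tracking this, one finds the only proper $M$ arising from a non-CM $h$ would have to act on a sub-object that is again a ball of positive dimension with the same division algebra $\Delta$ acting — forcing the underlying module to be all of $V$, hence $M = G$.

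The hard part will be the bookkeeping in the third step: correctly identifying which sub-Shimura-data of $(G,X)$ can occur, i.e., translating "proper Mumford--Tate group with positive-dimensional orbit" into an explicit sub-skewfield-with-involution datum inside $(\Delta,\tau)$, and then invoking the primality of $p = \deg(\Delta/K)$ to rule out all proper possibilities except those giving a $0$-dimensional orbit. Concretely, I expect to argue: a positive-dimensional orbit would give a sub-datum whose generic point is a non-CM negative line in $W \otimes_\mathbb{Q} \mathbb{R}$ for some $\Delta'$-submodule $W \subsetneq V$ with $\Delta' \subseteq \Delta$ a $\tau$-stable sub-skewfield of positive Witt index; since $\deg(\Delta/K)$ is prime, $\Delta'$ is either $K$ or all of $\Delta$, and in the former case the reduced norm/trace constraints force $W$ to carry a CM structure (so the orbit is a point), while in the latter case $W$ being a nonzero $\Delta$-submodule of the $1$-dimensional right $\Delta$-module $V$ forces $W = V$, hence $M_h = G$ and the orbit is all of $X$. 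An alternative, possibly cleaner route is to appeal directly to the argument already used in the proof of Proposition~\ref{prop:MinBallQuot} for the ball case: there the non-minimality of $\PGU(n,1)$-data was traced to the possibility of choosing a proper nondegenerate $K$-subspace of positive Witt index; here the division-algebra structure of prime degree obstructs exactly that construction, since any $\Delta$-stable nondegenerate subspace of positive Witt index must be all of $V$. I would phrase the final proof along these lines.
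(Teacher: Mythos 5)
There is a genuine gap, and it sits exactly where you flag "the hard part": the reduction of an arbitrary positive-dimensional sub-Shimura datum of $(G,X)$ to data of the form "a $\tau$-stable sub-skewfield $\Delta'\subseteq\Delta$ acting on a submodule $W\subsetneq V$" is never established, and as a general principle it is not correct. A proper Mumford--Tate group $M_h\subsetneq G$ with positive-dimensional orbit need not be cut out by endomorphisms of the underlying Hodge structure at all (it can be defined by higher-order Hodge tensors), and even in the PEL-type situation the extra endomorphisms live in the centralizer of $M_h$ inside $\End_\mQ(V)$, which contains $\Delta$ acting on the other side and is not a subalgebra of $\Delta$; so the dichotomy "$\Delta'=K$ or $\Delta'=\Delta$" does not by itself exhaust the possible subdata (for instance, nothing in your argument rules out unitary groups over larger CM fields, or other reductive subgroups invisible in $\Delta$). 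Likewise, the "alternative route" at the end is logically backwards: in Proposition~\ref{prop:MinBallQuot} the subspace construction was used to \emph{exhibit} non-minimality when $\Delta=K$; the unavailability of that particular construction here does not prove minimality, because it has not been shown that every sub-datum arises that way.

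The paper closes precisely this gap by a geometric input you do not use: by Goldman's classification, every positive-dimensional totally geodesic \emph{complex} submanifold of the ball $X$ is a linear section $X\cap\mP(H)$ for a linear subspace $H\subset\mC^p$ with $1<\dim(H)<p$, so the group $G'(\mR)^+$ of any sub-datum must stabilize~$H$. Then primality enters in the form you anticipated but applied to a different object: since $\dim(X')>0$ forces $G'$ to be non-abelian, two non-commuting elements $g_1,g_2\in G'(\mQ)$ generate a noncommutative $K$-subalgebra of~$\Delta$, which by primality of the degree is all of~$\Delta$; hence $\Delta\otimes_\mQ\mR\cong M_p(\mC)$ would stabilize the proper subspace $H$, contradicting irreducibility. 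Your instinct that $\deg(\Delta/K)$ prime kills all intermediate structure is the right one, but without Goldman's linearity statement (or some substitute reducing arbitrary subdata to linear subspaces of $\mC^p$) the classification step your proof relies on does not go through.
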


\begin{proof}
Suppose $(G,X)$ is not minimal, which means that there exists a sub-Shimura datum $(G^\prime,X^\prime) \subsetneq (G,X)$ with $\dim(X^\prime) > 0$. Let $X^{\prime,+}$ be a connected component of~$X^\prime$, which is a totally geodesic complex submanifold of~$X$. There is a unique linear subspace $H \subset \mC^p$ with $1 < \dim(H) < p$ such that $X^{\prime,+} = X \cap \mP(H)$; see \cite{Goldman}, Section~3.1.11. Let $G^\prime(\mR)^+$ be the (topological) identity component of~$G^\prime(\mR)$. The action of an element $g \in G^\prime(\mR)^+ \subset G(\mR)$ on~$X$ maps $X^{\prime,+} \subset X$ into itself.

The algebra $\Delta \otimes_\mQ \mR$ acts on~$\mC^p$. Let $\Stab(H) \subset (\Delta \otimes_\mQ \mR)$ be the stabilizer of the subspace~$H$. We have $G(\mR) \subset (\Delta \otimes_\mQ \mR)^*$ and this restricts to an inclusion $G^\prime(\mR)^+ \subset \Stab(H)^*$. Because $G^\prime$ is not abelian, we can choose two elements $g_1$, $g_2 \in G^\prime(\mQ)^+$ that do not commute. Let $K(g_1,g_2) \subset \Delta$ be the $K$-subalgebra generated by these elements. Because the degree of~$\Delta$ is prime we have $K(g_1,g_2) = \Delta$. On the other hand, $K(g_1,g_2) \subset \Delta \cap \Stab(H)$, where the intersection is take inside $\Delta \otimes_\mQ \mR$. It follows that $\Delta \otimes_\mQ \mR = \Stab(H)$, which is clearly impossible.
\end{proof}

%%
%% make bibliography small:
%%
{\small

} %% end of small for the blibliography
\bigskip

\noindent
\texttt{b.moonen@science.ru.nl}

\noindent
Radboud University Nijmegen, IMAPP, Nijmegen, The Netherlands

\end{document}